\theoremstyle{definition}
\newtheorem{definition}{Definition}[section]
\newtheorem{theorem}{Theorem}[section]
\newtheorem{proposition}[theorem]{Proposition}
\newtheorem{claim}[theorem]{Claim}
\newtheorem{fact}[theorem]{Fact}
\newtheorem*{acknowledgements}{Acknowledgements}
\title{A note on
Erd\H{o}s-Hajnal property for graphs with VC dimension $\leq 2$}
\author{Yayi Fu}
\date{}
\begin{document}
\maketitle
\begin{abstract}
Using techniques in \cite{chudnovsky2023erdHos} and substitution in \cite{alon2001ramsey}, we show that there is $\epsilon>0$ such that for any graph $G$ with VC-dimension $\leq 2$, $G$ has a clique or an anti-clique of size $\geq |G|^\epsilon$.
We also show that Erd\H os-Hajnal
property of VC-dimension $1$ 
graphs can be 
proved using 
$\delta$-dimension technique in \cite{chernikov2018note},
and we show that when $E$ is a
definable symmetric binary relation,
\cite[Theorem~1.3]{chernikov2018note}
can be proved without using Shelah’s 2-rank.
\end{abstract}
\section{Introduction}
\indent 

\emph{Erd\H{o}s-Hajnal conjecture} \cite{erdos1989ramsey} says for any graph $H$ there is $\epsilon>0$ such that if a graph $G$ does not contain any induced subgraph isomorphic to $H$ then $G$ has a clique or an  anti-clique of size $\geq |G|^\epsilon$.
More generally, we say a family of finite graphs has the \emph{Erd\H{o}s-Hajnal property} if there is $\epsilon>0$ such that for any graph $G$ in the family, $G$ has a clique or an anti-clique of size $\geq|G|^\epsilon$. 
Malliaris and Shelah proved in \cite{malliaris2014regularity} that the family of stable graphs has the Erd\H{o}s-Hajnal property. 
Chernikov and Starchenko gave another proof for stable graphs in \cite{chernikov2018note} and in \cite{chernikov2018regularity} they proved that the family of distal graphs has the strong Erd\H{o}s-Hajnal property. 
In general, we are interested in whether the family of finite VC-dimension (i.e. NIP \cite{simon2015guide}) graphs, which contains both stable graphs and distal graphs, has the Erd\H{o}s-Hajnal property. 
Motivation for studying this problem was given in \cite{fox2019erdHos}, which also gave a lower bound $e^{(\log n)^{1-o(1)}}$ for largest clique or anti-clique in a graph with bounded VC dimension. 
In this paper, we will show Erd\H os-Hajnal property for graphs with VC-dimension $\leq 2$.
\\
\indent
Section \ref{prelim} gives basic settings of graphs, stability, VC-dimension, ultraproduct, $\delta$-dimension.
\\
\indent
Section \ref{vc1} shows we can use the same technique 
in \cite{chernikov2018note} to show the Erd\H os-Hajnal property for graphs with VC-dimension $1$, 
which was proved using combinatorics.
\begin{theorem}
 The family of graphs with VC-dimension $\leq 1$ has the Erd\H{o}s-Hajnal property.
\end{theorem}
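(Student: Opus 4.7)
The plan is to follow the ultraproduct and $\delta$-dimension strategy of \cite{chernikov2018note}. I would argue by contradiction: suppose no $\epsilon>0$ works, so there is a sequence of graphs $G_n$ of VC-dimension $\leq 1$ with $|G_n|\to\infty$ such that no $G_n$ has a clique or anti-clique of size $\geq |G_n|^{1/n}$. Fix a non-principal ultrafilter $\mathcal{U}$ on $\omega$ and form $G^*=\prod_{\mathcal{U}}G_n$. By {\L}o\'s's theorem, $G^*$ is an infinite graph whose edge relation $E(x,y)$ has VC-dimension $\leq 1$. It suffices to exhibit, in a sufficiently saturated elementary extension of $G^*$, a definable partial type witnessing a polynomial lower bound on the size of a homogeneous set, since {\L}o\'s transfers this back to cofinally many $G_n$ and gives the contradiction.

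The technical heart is to set up a $\delta$-rank for $\delta(x;y)=E(x,y)$ on the partial type $\{x=x\}$ as in \cite{chernikov2018note}, and to exploit the fact that VC-dim $\leq 1$ severely restricts $\delta$-splitting trees. Specifically, for any two parameters $a,b\in G^*$, at most three of the four Boolean patterns $(E(x;a),E(x;b))$ are realized, so every depth-$2$ node of a consistent $\delta$-splitting tree has a missing branch. I would use this constraint, together with the symmetry of $E$, either to bound a suitably modified rank or to run a dichotomy: in one case Chernikov--Starchenko's iterative extraction---at each stage passing to a definable large side of the current set that is homogeneous with respect to a newly chosen vertex---produces a clique or anti-clique in $G^*$ of size $\geq |G^*|^\epsilon$ for some fixed $\epsilon>0$; in the other case an order-like configuration in an indiscernible sequence produces a half-graph-like bipartite structure whose two sides already furnish anti-cliques of polynomial size.

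The main obstacle is that $E$ need not be stable under VC-dim $\leq 1$: the half-graph itself has VC-dim $1$ with the order property, so the Cantor--Bendixson-style $\delta$-rank is unbounded and the Chernikov--Starchenko argument does not apply verbatim. One must either refine the rank to absorb the VC-dim $1$ missing-branch constraint (trading stability for a direct combinatorial bound on the branching of the tree) or cleanly separate a \emph{stable} sub-case from a \emph{half-graph-like} sub-case while preserving uniform polynomial control of the exponent. Arranging this dichotomy is where I expect to concentrate the work; it is also the step whose generalization to VC-dimension $\geq 3$ fails, which motivates the additional techniques from \cite{chudnovsky2023erdHos} and \cite{alon2001ramsey} used for the main theorem of the paper.
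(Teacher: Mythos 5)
Your setup is the same as the paper's: assume failure, form the ultraproduct, and exploit the $\delta$-dimension machinery of \cite{chernikov2018note} together with the observation that VC-dimension $\leq 1$ forbids any pair $a,b$ from realizing all four patterns $(E(x;a),E(x;b))$. You also correctly identify the central obstacle: $E$ need not be stable (the half-graph has VC-dimension $1$), so the rank-based extraction does not apply verbatim. But your proposal stops exactly where the real work begins, and the two escape routes you sketch are not the ones that work. The paper's dichotomy is not ``stable sub-case versus half-graph-like sub-case''; it is: either some definable $A$ with $\delta(A)>0$ satisfies Property $(*)$ (a definable large set of vertices each of whose neighborhoods, or non-neighborhoods, in $A$ has strictly smaller $\delta$-dimension), in which case a maximal-anticlique covering argument (Proposition \ref{prop*}) produces a homogeneous set of positive $\delta$-dimension; or Property $(*)$ fails for every such $A$, in which case a saturation argument (Claim \ref{claim2}) yields, inside every large definable set, adjacent pairs of ``splitting'' vertices, and a finite but delicate case analysis on the sets $L=\neg E(x,a_0)$, $R=E(x,a_0)$ and their further splittings explicitly assembles two vertices that are shattered, contradicting VC-dimension $\leq 1$. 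That case analysis is the theorem; nothing in your outline substitutes for it.

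Moreover, your fallback claim for the ``order-like'' case --- that a half-graph-like bipartite structure has two sides that ``already furnish anti-cliques of polynomial size'' --- is unjustified and, as stated, false: the half-graph pattern constrains only the edges \emph{between} the two sides, not within them, so neither side need be close to homogeneous. Likewise, ``refining the rank to absorb the missing-branch constraint'' is not shown to terminate or to produce a definable set of positive $\delta$-dimension. To complete the proof along the paper's lines you would need (i) the covering argument showing Property $(*)$ yields a large homogeneous set, (ii) the compactness/saturation lemmas (Claims \ref{claim1} and \ref{claim2}) converting the failure of $(*)$ into the existence of adjacent splitting vertices in every large definable set, and (iii) the explicit construction of a shattered pair from that configuration. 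As it stands the proposal is a correct framing with the decisive combinatorial step missing.
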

Section \ref{stabcase} shows 
Erd\H os-Hajnal property for stable graphs
can be proved without using Shelah’s 2-rank.
\begin{theorem}
 For each $k\in\mathbb{N}$,
 the family of $k$-stable graphs has the Erd\H{o}s-Hajnal property.    
\end{theorem}
Section \ref{vcq2} shows that Erd\H os-Hajnal property holds for graphs with VC-dimension $2$. 
\begin{theorem}
    The family of graphs with VC-dimension $\leq 2$ has the Erd\H{o}s-Hajnal property.
\end{theorem} 
\begin{acknowledgements}
The author is grateful to her advisor Sergei Starchenko for helpful suggestions.
\end{acknowledgements}
\section{Preliminaries}\label{prelim}
\indent

A \emph{graph} $G$ is a structure $(V,E)$ where $V$ is the underlying set 
($V$ can be finite or infinite),
$E$ is a symmetric anti-reflexive binary relation.
$H$ is an \emph{induced subgraph} of $G$
if $H\subseteq G$ as a substructure. 
$G$ is \emph{$H$-free} if $G$ does not 
contain $H$ as an induced subgraph. 
If $\mathcal{H}$ is a family of graphs, 
we say $G$ is $\mathcal{H}$-free if for any 
$H\in\mathcal{H}$, $G$ is $H$-free. 
$\overline{G}$ denotes the \emph{complement} of $G$, 
i.e. $G=(V,E)$ and $\overline{G}=(V,\overline{E})$ have the same vertex set 
$V$ and for any distinct vertices $a,b\in V$,
$a\overline{E}b$ in $\overline{G}$ iff 
$\neg aEb$ in $G$.
A subset $A\subseteq V$ is a 
\emph{homogeneous set} if
the induced subgraph $A$ is a clique
or an anti-clique.
\\
\indent
For $a\in V$, $A\subseteq V$,
let $E(a,A)$ denote the set
$\{x\in A: 
E(a,x)\}$ and
let $\neg
E(a,A)$ denote the set
$\{x\in A:
\neg
E(a,x)\}$.
\\
\indent
We use the pseudo-finite setting in \cite{chernikov2018note}:
\\
\indent
Let $\{G_i=(V_i,E_i):
i\in\omega\}$ be a sequence of 
finite graphs.
Let $\mathcal{F}$ be a non-principal ultrafilter of $\omega$.
Let $G=(V,E)$ be the ultraproduct
$\underset{i\in\omega}{\prod}
(V_i,E_i)/
\mathcal{F}$. 
(For simplicity,
we write it as
$\underset{i\in\omega}{\prod}
V_i/
\mathcal{F}$.)
\\
\indent
Let $A$ be an internal set 
$\underset{i\in\omega}{\prod}
A_i/\mathcal{F}$, where each $A_i$ is a non-empty subset of $V_i$.
For each $i\in \omega$, 
let $l_i = \log(|A_i|)/ \log(|V_i|)$.
We define the \emph{$\delta$-dimension of $A$}, denoted by $\delta(A)$, to be the unique number $l\in[0,1]$
such that for any
$\epsilon\in\mathbb{R}^{>0}$,
the set $\{i\in\omega:
l-\epsilon<l_i <l+\epsilon\}$ is in $\mathcal{F}$. 
\begin{definition}
    Let $G=(V,E)$ be a graph.
    Let $k\in\mathbb{N}$.
    $G$ is \emph{$k$-stable} if there do not exist some $a_1,...,a_k\in V$,
    $b_1,...,b_k
    \in V$ such that 
    $E(a_i,b_j)$ holds if and only if $i\leq j$. 
\end{definition}
\begin{fact}\label{stabequiv}
    \cite[Theorem~2.2]{shelah1990classification}
    Let $G=(V,E)$  be the ultraproduct
$\underset{i\in\omega}{\prod}V_i/
\mathcal{F}$. 
$G$ is unstable for all $k\in\mathbb{N}$ iff
there is $A\subseteq V$ and $\lambda\geq\aleph_0$ such that $|S^1_E(A)| > \lambda \geq|A|$.
($S^1_E(A):=
\{\underset{a\in A}{\bigcap} E(x;a)^{\epsilon(\Bar{a})}:
\epsilon\in2^{A}\}$.)
\end{fact}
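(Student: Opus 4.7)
I would treat this as Shelah's classical theorem that instability of a formula is equivalent to the existence of many types over a small set, and prove each direction separately.

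\emph{Direction $(\Rightarrow)$.} Assume $G$ fails to be $k$-stable for every $k$. The plan is to upgrade the finite witnessing data to a long order-indiscernible sequence and then read off many types from cuts. By compactness (using that the ultraproduct is $\aleph_1$-saturated, and passing to a further elementary extension if needed), I obtain an infinite sequence $(a_i,b_i)_{i<\omega}$ in some $G^*\succeq G$ with $E(a_i,b_j)\iff i\leq j$. Applying Erd\H{o}s-Rado together with compactness, I extract an order-indiscernible sequence $(a_\alpha,b_\alpha)_{\alpha<\mu}$ of any desired length $\mu$ still witnessing the order property. Now fix any infinite $\lambda$, work in a $\lambda^+$-saturated extension, and take $A=\{a_\alpha:\alpha<\lambda\}$, so $|A|=\lambda$. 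For each Dedekind cut $C$ of the ordered set $A$, the partial type $\{E(x,a_\alpha):a_\alpha\in C\}\cup\{\neg E(x,a_\alpha):a_\alpha\notin C\}$ is finitely consistent by indiscernibility and the order property, hence realised; distinct cuts give distinct types, producing $|S^1_E(A)|\geq 2^\lambda>\lambda\geq|A|$.

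\emph{Direction $(\Leftarrow)$.} Assume $|S^1_E(A)|>\lambda\geq|A|\geq\aleph_0$. The goal is to convert this cardinality gap into an infinite splitting tree. Recursively construct $(c_\eta)_{\eta\in 2^{<\omega}}\subseteq A$ and consistent partial $E$-types $(p_\eta)_{\eta\in 2^{<\omega}}$ over $A$ such that $p_{\eta\smallfrown 0}$ extends $p_\eta\cup\{\neg E(x,c_\eta)\}$ and $p_{\eta\smallfrown 1}$ extends $p_\eta\cup\{E(x,c_\eta)\}$, each still extendable to at least two types of $S^1_E(A)$. To see the recursion never halts, one shows that if at some node all consistent completions agreed on $E(x,a)$ for every $a\in A$, then the total number of completions of $p_\eta$ would be bounded by $|A|^{<\omega}=|A|\leq\lambda$, and summing over the finitely many closed-off branches would contradict $|S^1_E(A)|>\lambda$. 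From the resulting tree $(c_\eta)$, a standard Ramsey/extraction argument along $2^{\omega}$ yields an infinite sequence $(a'_n,b'_n)_{n<\omega}$ in a saturated extension with $E(a'_n,b'_m)\iff n\leq m$, which specialises to arbitrarily long finite such sequences inside $G$ itself (elements of the ultraproduct come from finite graphs), contradicting $k$-stability for every $k$.

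\emph{Main obstacle.} The subtle step is the reverse direction: turning a raw cardinality inequality into genuine combinatorial splitting. Shelah's original argument packages this via the 2-rank, proving that finite 2-rank bounds $|S^1_E(A)|$ by a polynomial in $|A|$; if one wishes to bypass the rank machinery (as the paper does elsewhere), the substitute is a direct König-style analysis of the tree of restrictions, keeping careful book-keeping on the number of surviving completions at each level. The pseudofinite $\delta$-dimension framework of \cite{chernikov2018note} offers a convenient language for this book-keeping by measuring branching rates of $A_i$ coordinate-wise. I expect the actual cardinal arithmetic to be routine once the tree construction is set up; the only care needed is to ensure that the sequence extracted from the tree lives in a model elementarily equivalent to $G$ so that finite instances descend to the finite graphs $G_i$.
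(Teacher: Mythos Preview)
The paper does not prove this statement: it is recorded as a \textbf{Fact} with a bare citation to \cite[Theorem~2.2]{shelah1990classification}, and no argument is given. There is therefore no ``paper's own proof'' to compare your proposal against; the result is simply imported from Shelah's book and used as a black box in Section~\ref{stabcase}.

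Your sketch is a reasonable outline of the classical argument. Two minor remarks if you intend to flesh it out. First, in the forward direction you pass to a $\lambda^+$-saturated elementary extension, but the Fact as stated asks for $A\subseteq V$; since the ultraproduct $G$ is already $\aleph_1$-saturated, it suffices to take $\lambda=\aleph_0$ and let $A$ be a countable set of order-witnesses inside $V$ itself, so no extension is needed. Second, in the reverse direction the sentence ``a standard Ramsey/extraction argument along $2^{\omega}$ yields an infinite sequence with $E(a'_n,b'_m)\iff n\leq m$'' is doing real work that you have not spelled out: converting a binary $E$-splitting tree directly into a half-graph is not a one-liner. The textbook route is the contrapositive (if $E$ is $k$-stable for some $k$ then the $2$-rank is finite and $|S^1_E(A)|\leq |A|$ for infinite $A$), which is precisely the $2$-rank machinery you say you want to avoid; if you really want a rank-free argument here you should indicate how the order is read off the tree.
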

\begin{definition}
For $d\in\mathbb{N}$,
a graph $G=(V,E)$ is of VC-dimension $<d$ if
there is no $d$-tuple $(x_0,...,x_{d-1})$
of pairwise distinct vertices in $V$ such that for all $\epsilon\in 2^d$,
there is $a_{\epsilon}\in V$ such that 
$\underset{i\in d}{\bigwedge}
E(a_\epsilon,x_i)^{\epsilon(i)}$.
\end{definition}
\begin{fact}\label{c6eh}
    \cite[1.9]{chudnovsky2023erdHos}
    The family $\{G: G $ is $
 \{C_6,\overline{C}_6\}$-free $\}$ has Erd\H os-Hajnal property. ($C_6$ is the $6$-cycle. )
\end{fact}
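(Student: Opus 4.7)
The plan is to use Fact~\ref{c6eh} as a base case and reduce the general VC-dimension $\leq 2$ problem to it via the substitution machinery of \cite{alon2001ramsey}. Let $\mathcal{V}_{\leq 2}$ denote the family of VC-dimension $\leq 2$ graphs and fix $G=(V,E) \in \mathcal{V}_{\leq 2}$. The first step is a dichotomy: either $G$ is $\{C_6,\overline{C}_6\}$-free, in which case Fact~\ref{c6eh} immediately supplies a homogeneous set of size $\geq |V|^{\epsilon_0}$; or, after possibly replacing $G$ by $\overline{G}$ (which preserves both VC-dimension $\leq 2$ and the Erd\H os--Hajnal conclusion), $G$ contains an induced $C_6$ on some vertex set $X=\{v_1,\dots,v_6\}$.

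In the latter case I would use the VC-dimension bound to analyse how the outside vertices attach to $X$. Each $w \in V\setminus X$ has a neighbourhood trace $t(w)\in 2^X$, and VC-dimension $\leq 2$ forbids any $3$-element subset of $X$ from being shattered by $\{t(w):w\in V\setminus X\}$. By Sauer--Shelah this bounds the number of realised traces by an absolute constant, so some trace class $T\subseteq V\setminus X$ has $|T|\geq c|V|$ for an absolute $c>0$; on $X\cup T$ every vertex of $T$ is attached to $X$ by the same bipartite pattern, making $G[X\cup T]$ a substitution of $C_6$ in which $T$ occupies one (or more) of the six vertices.

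Next I would invoke the Alon--Pach--Solymosi substitution lemma: if the outer graph and each inner piece have the Erd\H os--Hajnal property with positive exponent, then so does the substitution, with an explicit exponent depending only on the pieces' exponents. Since the outer graph is the fixed six-vertex graph $C_6$ and the other inner pieces are singletons, it suffices to establish the Erd\H os--Hajnal property for $G[T]$, which is again in $\mathcal{V}_{\leq 2}$ but strictly smaller than $G$. Combined with Fact~\ref{c6eh} and the $\{C_6,\overline{C}_6\}$-free base case, this sets up a recursive scheme.

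The main obstacle will be controlling the exponent across the recursion: a naive size-induction on $G[T]$ would degrade the exponent at every step and fail to produce a uniform $\epsilon>0$. To obtain a single $\epsilon$ working for all of $\mathcal{V}_{\leq 2}$, one needs the recursion to have depth bounded by a function of the VC-dimension alone. The cleanest way to do this, in the spirit of \cite{chudnovsky2023erdHos}, is to attach to each cycle-containing configuration an integer invariant --- for example a rank on the trace structure around $X$ --- that strictly decreases at each substitution step, forcing termination in the $\{C_6,\overline{C}_6\}$-free regime after boundedly many iterations. Verifying that such an invariant exists and behaves well under the substitution produced above is the heart of the argument.
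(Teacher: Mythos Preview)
Your proposal does not address the statement in question. Fact~\ref{c6eh} is quoted from \cite{chudnovsky2023erdHos} and is not proved in the paper; there is no argument here to compare against. What you have actually sketched is an approach to the main theorem of Section~\ref{vcq2} (Erd\H os--Hajnal for VC-dimension $\leq 2$), which \emph{uses} Fact~\ref{c6eh} as a black box.

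Read as an attempt at that theorem, your route differs from the paper's and has a genuine gap. The paper does not recurse on a trace class $T\subseteq V\setminus X$. Instead it runs the Alon--Pach--Solymosi counting argument once (three times, really) to show that any graph with sub-polynomial homogeneous sets contains an induced copy of $C_6$ (or $\overline{C_6}$) in which each of the vertices $2,4,6$ has been replaced by an edge. It then arranges, via the ultraproduct, for this nine-vertex configuration to sit inside a set $W=\neg E(y,V)\cap E(x,V)$ of positive $\delta$-dimension. The vertices $1,3,5,2',4',6',x,y$ then realise all eight Boolean patterns on $\{2,4,6\}$, shattering a triple and contradicting VC-dimension $\leq 2$. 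There is no induction on $|G|$, so the exponent is uniform for free.

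Your scheme, by contrast, never terminates: $G[T]$ is again in $\mathcal{V}_{\leq 2}$ and may again contain an induced $C_6$, and the ``integer invariant'' you invoke to bound the recursion depth is left completely unspecified. You correctly identify this as the heart of the argument, but nothing in your setup (Sauer--Shelah on a fixed six-vertex set, the substitution lemma applied to $G$ rather than to the forbidden family) produces such an invariant, and without it the exponent degrades at every step and no uniform $\epsilon$ survives.
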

\section{VC-dimension 1}\label{vc1}
\begin{definition}
  Let $G=(V,E)$ be the ultraproduct
$\underset{i\in\omega}{\prod}V_i/
\mathcal{F}$. 
For a definable set $A\subseteq V$ such that $\delta(A)>0$,
we say that $A$ satisfies \emph{Property $(*)$} if there is a definable 
$A^+ \subseteq \{a\in A\, |\, \delta (\{x\in A \,
|\,  E(x,a)\})<
\delta (A)\}$
such that $\delta (A^+)=\delta (A)$ or there 
is a definable $A^- \subseteq \{a\in A\, |\, \delta (\{x\in A \, |\,  \neg E(x,a)\})
<\delta (A)\}$ such that $\delta (A^-)=\delta (A)$.
\\
\indent
         For a definable subset $S$ and a vertex $s\in S$, we say that $\emph{s splits S}$ if $\delta(\{x\in S\,|\,E(x,s)\})>0$ and $\delta(\{x\in S\,|\,\neg E(x,s)\})>0$.
\end{definition}
\begin{proposition}\label{prop*}
 Let $G=(V,E)$ be the ultraproduct
$\underset{i\in\omega}{\prod}V_i/
\mathcal{F}$. 
Assume $A\subseteq V$ is definable with $\delta (A)>0$, and $A$ satisfies property $(*)$.
Then $A$ has a homogeneous subset with positive $\delta$-dimension.
\end{proposition}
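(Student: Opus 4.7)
The plan is to upgrade the qualitative hypothesis ``$\delta(E(a,A))<\delta(A)$'' into a uniform internal bound on neighborhood sizes via $\aleph_1$-saturation, after which a standard Tur\'an-type greedy argument yields an anti-clique of positive $\delta$-dimension.

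By applying the argument to $\overline{G}$ in the $A^-$ alternative (which produces a clique instead), I may assume the first alternative in Property $(*)$ holds: let $l:=\delta(A)>0$ and fix a definable $A^+\subseteq A$ with $\delta(A^+)=l$ and $\delta(E(a,A))<l$ for every $a\in A^+$. I will produce a definable anti-clique $H\subseteq A^+$ of positive $\delta$-dimension.

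\emph{Step 1 (Uniformization via saturation).} For each $k\in\mathbb{N}$ define the definable set
\[
A_k:=\{a\in A^+:|E(a,A)|^k\leq|A|^{k-1}\},
\]
where cardinalities are internal. The chain $(A_k)_{k\in\mathbb{N}}$ is ascending. For any $a\in A^+$ with $l':=\delta(E(a,A))<l$, any integer $k>l/(l-l')$ satisfies $\log|E(a,A)|/\log|A|<(k-1)/k$ along $\mathcal{F}$, whence $a\in A_k$. Thus $A^+=\bigcup_{k\in\mathbb{N}}A_k$. Since the countable-index ultraproduct $V$ is $\aleph_1$-saturated and $A^+$ is definable, this ascending countable cover by definable sets must stabilize: there exists $k\in\mathbb{N}$ with $A^+=A_k$.

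\emph{Step 2 (Greedy/Tur\'an extraction).} For this $k$, every $a\in A^+$ satisfies $|E(a,A^+)|\leq|E(a,A)|\leq|A|^{(k-1)/k}$, so the induced graph on $A^+$ has maximum degree at most $|A|^{(k-1)/k}$. The standard greedy bound, applied internally in each finite $V_i$, then produces a definable anti-clique $H\subseteq A^+$ with $|H|\geq|A^+|/(|A|^{(k-1)/k}+1)$. Taking $\log/\log|V|$ and passing to the $\delta$-dimension limit yields
\[
\delta(H)\geq l-\tfrac{k-1}{k}l=\tfrac{l}{k}>0.
\]

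The main obstacle is Step 1, since $\delta(E(a,A))<l$ is not literally first-order: I must replace it by the internal surrogate $|E(a,A)|^k\leq|A|^{k-1}$ and then use $\aleph_1$-saturation of the countable-index ultraproduct to force a single $k$ to work uniformly across all of $A^+$. Once that uniform degree bound is in hand, Step 2 is a routine application of the usual independent-set bound.
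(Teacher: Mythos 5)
Your proposal is correct and takes essentially the same route as the paper: the paper's Claim 3.3 likewise uses $\omega_1$-saturation (after naming suitable internal level sets, as in your $A_k$) to extract a uniform bound $\delta(E(a,A^+))\leq\epsilon\,\delta(A^+)$ with $\epsilon<1$, and then builds the anti-clique as a coordinatewise maximal independent set $B=\prod B_i/\mathcal{F}$, whose positive $\delta$-dimension follows from the same covering/Tur\'an estimate $\delta(A^+\setminus B)\leq\delta(B)+\epsilon\,\delta(A^+)$. The only cosmetic caveat is that your sets $A_k$ and the greedy output $H$ are internal rather than definable in the graph language, so one should (as the paper does, citing the device of adding relation symbols) expand the language to apply saturation to them.
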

\begin{proof}
Let $A\subseteq V$ be definable with $\delta (A)>0$,
and $A$ satisfies property $(*)$. 
May assume that there is a definable 
 $A^+ \subseteq \{a\in A\, |\, \delta (\{x\in A \, |\,  E(x,a)\})
 < \delta (A)\}$
such that $\delta (A^+)=\delta (A)$.
\\
\indent
For $a\in A^+$, $\delta (\{x\in A^+\, |\, E(x,a)\})\leq \delta(\{ x\in A \, |\, E(x,a)\})< \delta (A)=
\delta(A^+)$.
\begin{claim}\label{unifbdd}
    Suppose $A\subseteq V$ is definable and there is $\alpha>0$ such that for all $a\in A$, $\delta(E(a,A))<\alpha$. Then there is $\beta<\alpha$ such that
    for all $a\in A$, 
    $\delta(E(a,A))\leq\beta$.
    \end{claim}
\begin{proof}
    Let $0<\alpha_1<\alpha_2<...$ be a sequence increasing to $\alpha$. 
    By adding relation symbols as in \cite{bays2018projective}, we may assume there exist $D_n$ definable such that\\
$\{y\in A\,|\,\delta(E(y,A))\geq\alpha_{n+1}\}\subseteq D_n\subseteq\{y\in A\,|\,\delta(E(y,A))\geq\alpha_n\}$. 
If all $D_n$'s are not empty, by $\omega_1$-saturation and compactness, $\underset{n}{\bigcap}D_n\neq\emptyset$. Then there is $a\in A$ such that $\delta(E(a,A))\geq\alpha$, a contradiction. So $D_n=\emptyset$ for some $n$.
\end{proof} 
Hence, by claim \ref{unifbdd},
there is $\epsilon\in(0,1)$ such that
 for all $a\in A^+$, 
    $\delta(E(a,A^+))\leq\epsilon\delta(A)$.
    \\
    \indent
(Similar to the proof in \cite{chernikov2018note}.) 
Let $A^+=\prod A_i/\mathcal{F}$. For each $i\in \omega$, let $B_i\subseteq A_i$ be maximal such that $\neg E_i(x,y)$ for all $x,y\in B_i$. 
Let $B=\prod B_i/\mathcal{F}$. 
Then 
\begin{enumerate}
    \item[(i)]
$B\subseteq A^+.$
\item[(ii)]
$V\vDash (\forall x,y\in B)$ $\neg E(x,y)$. \item[(iii)] 
For any $a\in A^+\setminus B$, there is $b\in B$ such that $V\vDash E(a,b)$.
\end{enumerate}

Hence, $A^+\setminus B\subseteq \underset{b\in B}{\bigcup} \{ x\in A^+\, |\, E(x,b) \}$ and $\delta(A^+\setminus B)\leq \delta (B)+
\epsilon\delta (A)
=
\delta (B)+
\epsilon\delta (A^+)$.
So $\delta (B)>0$.
\\
\indent
Proof is similar if there is a definable $A^- \subseteq \{a\in A\, |\, \delta (\{x\in A \, |\,  \neg E(x,a)\})
< 
\delta (A)\}$ such that 
$\delta (A^-)
=\delta (A)$.
\end{proof}   
\begin{claim}\label{claim1}
     Fix a definable $A$ such that $\delta (A)>0$. 
     Then the set $\{a\in A\,  |\, \delta(\{x\in 
     A\,|\, E(x,a)\})<\delta(A)\}$ is a countable 
     union of definable sets. 
     The same holds for $\{a\in A\,  |\, \delta(\{x\in A\,|\, \neg E(x,a)\})<\delta(A)\}$.
\end{claim}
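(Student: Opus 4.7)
The plan is to reuse the approximation device already invoked in the proof of Claim \ref{unifbdd}. Fix a strictly increasing sequence of rationals $0 < \beta_1 < \beta_2 < \cdots$ with $\beta_n \nearrow \delta(A)$ and $\beta_n < \delta(A)$ for every $n$. For each $n$ and each index $i$, the subset
$$D_n^i := \{a_i \in A_i : |\{x \in A_i : E_i(x, a_i)\}| \leq |V_i|^{\beta_n}\}$$
of $A_i$ is definable after the language expansion of \cite{bays2018projective}; set $D_n := \prod_i D_n^i / \mathcal{F}$, which is then a definable subset of $A$.

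Next I would check the two inclusions needed. Write $N_i(a_i) := |\{x \in A_i : E_i(x, a_i)\}|$ and $l_i(a) := \log N_i(a_i) / \log |V_i|$, so that $\delta(E(a,A))$ is by definition the unique $l \in [0,1]$ with $\{i : |l_i(a) - l| < \epsilon\} \in \mathcal{F}$ for every $\epsilon > 0$. If $a \in D_n$ then $l_i(a) \leq \beta_n$ on a set in $\mathcal{F}$, which forces $\delta(E(a,A)) \leq \beta_n < \delta(A)$; so $D_n \subseteq \{a \in A : \delta(E(a,A)) < \delta(A)\}$. Conversely, if $\delta(E(a,A)) < \beta_n$, taking $\epsilon := (\beta_n - \delta(E(a,A)))/2 > 0$ in the definition of $\delta$-dimension gives $\{i : l_i(a) < \beta_n\} \in \mathcal{F}$, i.e., $a \in D_n$.

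Combining, any $a$ with $\delta(E(a,A)) < \delta(A)$ satisfies $\delta(E(a,A)) < \beta_n$ for some $n$ (since $\beta_n \nearrow \delta(A)$), hence lies in $D_n$, and I conclude
$$\{a \in A : \delta(E(a,A)) < \delta(A)\} = \bigcup_{n \in \omega} D_n,$$
a countable union of definable sets. Running the identical argument with $E$ replaced by $\neg E$ throughout handles the second assertion. The only subtle point is the language expansion used to ensure each $D_n$ is genuinely definable rather than merely internal; once that appeal to \cite{bays2018projective} is granted, everything else is a direct unpacking of the definition of $\delta$-dimension.
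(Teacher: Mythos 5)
Your proof is correct and follows essentially the same route as the paper: exhaust the set by countably many definable sets $D_n$ attached to a sequence of thresholds increasing to $\delta(A)$, with each $D_n$ sandwiched between the corresponding strict level sets. The only difference is cosmetic — where the paper invokes ``continuity of $\delta$-dimension'' to produce the sandwiched definable $D_n$, you construct them explicitly as internal cardinality-threshold sets $\{a_i\in A_i : |E_i(a_i,A_i)|\leq |V_i|^{\beta_n}\}$ (made definable via the language expansion of the cited reference), which is exactly the mechanism the paper has in mind.
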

\begin{proof}
$\{a\in A\,  |\, \delta(\{x\in A\,|\, E(x,a)\})<\delta(A)\}=
\underset{n\in\omega}{\bigcup}\{a\in A\,  |\, \delta(\{x\in A\,|\, E(x,a)\})<
\delta(A)-\frac{1}{n}\}$. 
By continuity of $\delta$-dimension, for each $n\in\omega$,
there is a definable $D_n$ such that $\{a\in A\,  |\, \delta(\{x\in A\,|\, E(x,a)\})
<\delta(A)-\frac{1}{n}\}\subseteq D_n\subseteq 
\\
\{a\in A\,  |\, \delta(\{x\in A\,|\, E(x,a)\})<\delta(A)-\frac{1}{n+1}\}$. 
\\
\indent
Hence, $\{a\in A\,  |\, \delta(\{x\in A\,|\, E(x,a)\})
<\delta(A)\}=\underset{n\in\omega}{\bigcup}D_n$.
\\
\indent
Similar for $\{a\in A\,  |\, \delta(\{x\in A\,|\, \neg E(x,a)\})
<\delta(A)\}$.
\end{proof}
\begin{claim}\label{claim2}
 Fix a definable $A$ such that $\delta(A)>0$. If property $(*)$ fails for $A$,
 i.e. if for all definable $B\subseteq A$ with $\delta(B)=\delta(A)$, 
 \\
 $B\nsubseteq \{a\in A\,|\,\delta(\{x\in A\,|\,E(x,a)\})
 <
 \delta (A)\}$ and 
 \\
 $B\nsubseteq \{a\in A\,|\,\delta(\{x\in A\,|\,\neg E(x,a)\})
 <
 \delta (A)\}$, 
 then for all $B\subseteq A$ with $\delta(B)=\delta(A)$, 
 \\
 $B\nsubseteq \{a\in A\,|\,\delta(\{x\in A\,|\,E(x,a)\})
 <
 \delta (A)\}\cup\{a\in A\,|\,\delta(\{x\in A\,|\,\neg E(x,a)\})
 <
 \delta (A)\}$.
 \\
 \indent
Moreover, suppose property $(*)$ fails for all $A$ with $\delta(A)>0$.
Fix $A$ with $\delta(A)>0$.
Then for any $B\subseteq A$ with $\delta(B)=\delta(A)$,
there exist $a,a'\in B$, $a\neq a'$ such that $\delta(\{x\in A\,|\, E(x,a)\})>0$,
$\delta(\{x\in A\,|\,\neg E(x,a)\})>0$, 
$\delta(\{x\in A\,|\, E(x,a')\})>0$, $\delta(\{x\in A\,|\,\neg E(x,a')\})>0$ and $E(a,a')$.
\end{claim}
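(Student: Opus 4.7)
The plan is to combine Claim \ref{claim1} with an $\omega_1$-saturation (compactness) reduction. Let $S_E := \{a \in A : \delta(E(a,A)) < \delta(A)\}$ and $S_{\neg E} := \{a \in A : \delta(\neg E(a,A)) < \delta(A)\}$; by Claim \ref{claim1}, write them as countable increasing unions $\bigcup_n D_n$ and $\bigcup_n D'_n$ of definable sets. Failure of Property $(*)$ for $A$ is exactly the statement that no definable subset of $S_E$ (resp.\ $S_{\neg E}$) has $\delta$-dimension $\delta(A)$. Hence $\delta(D_n), \delta(D'_n) < \delta(A)$ for every $n$, so $\delta(D_n \cup D'_n) < \delta(A)$.

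For the first assertion, I would suppose toward contradiction that some $B \subseteq A$ has $\delta(B) = \delta(A)$ and $B \subseteq S_E \cup S_{\neg E} = \bigcup_n (D_n \cup D'_n)$. The partial type $\{x \in B\} \cup \{x \notin D_n \cup D'_n : n \in \omega\}$ is then unrealized in the $\omega_1$-saturated ultraproduct, so by $\omega_1$-saturation some finite fragment is inconsistent, giving $B \subseteq D_N \cup D'_N$ for some single $N$; thus $\delta(B) \leq \delta(D_N \cup D'_N) < \delta(A)$, a contradiction.

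For the moreover part, set $T := A \setminus (S_E \cup S_{\neg E})$. By the first assertion applied to $B$ and to $B$ with finitely many points removed, $B \cap T$ contains at least two distinct points. Suppose toward contradiction that $B \cap T$ has no edge. Letting $B^{(n)} := B \setminus (D_n \cup D'_n)$, which is definable of $\delta$-dimension $\delta(A)$ (since $\delta(D_n \cup D'_n) < \delta(A) = \delta(B)$), the partial type $\{x \in B^{(n)},\, y \in B^{(n)} : n \in \omega\} \cup \{x \neq y,\, E(x,y)\}$ is unrealized; by $\omega_1$-saturation it is finitely inconsistent, so some $B^{(N)}$ is itself a definable anti-clique. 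But then $B^{(N)}$ has positive $\delta$-dimension and trivially satisfies Property $(*)$ via $A^+ := B^{(N)}$ (for each $a \in B^{(N)}$, $E(a, B^{(N)}) = \emptyset$, so $\delta(E(a, B^{(N)})) < \delta(B^{(N)})$), contradicting the strong hypothesis. Hence $B \cap T$ carries an edge $(a,a')$; since every element of $T$ satisfies $\delta(E(\cdot, A)), \delta(\neg E(\cdot, A)) \geq \delta(A) > 0$, this pair meets the required splitting conditions.

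The step I expect to be most delicate is collapsing the type-definable edge-free set $B \cap T$ to a single definable anti-clique $B^{(N)}$; this saturation move is precisely what lets the strong hypothesis bite, since Property $(*)$ holds for any definable anti-clique of positive $\delta$-dimension. A minor subtlety is the convention by which $\delta$ extends from definable to arbitrary subsets; throughout I treat $B$ as internal, which is the standing pseudofinite convention in this setting.
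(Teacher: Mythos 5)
Your proof is correct and follows essentially the same route as the paper's: both parts reduce to the Claim~\ref{claim1} decomposition into definable sets $D_n, F_m$ of dimension $<\delta(A)$ (else Property $(*)$ would hold), followed by an $\omega_1$-saturation/compactness argument, with the key observation in the moreover part that a definable anti-clique of positive $\delta$-dimension trivially satisfies Property $(*)$. The only difference is cosmetic: you argue contrapositively (collapsing the type-definable sets to a single $D_N \cup D'_N$ or $B^{(N)}$), whereas the paper writes down the partial types $\Sigma$, $\Sigma'$ and checks finite satisfiability directly.
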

\begin{proof}
     By Claim \ref{claim1}, let$\{a\in A\,  |\, \delta(\{x\in A\,|\, E(x,a)\})
     <
     \delta(A)\}=\underset{n\in\omega}{\bigcup}D_n$ and 
     $\{a\in A\,  |\, \delta(\{x\in A\,|\, \neg E(x,a)\})
     <\delta(A)\}=\underset{n\in\omega}{\bigcup}F_m$.
     Fix $B\subseteq A$ definable such that $\delta(B)=\delta(A)$. 
\\
\indent
Consider $\Sigma:=\{B(x)\}\cup\{\neg D_n(x), \neg F_m(x)\,|\, n<\omega, m<\omega\}$.
If $B\subseteq \underset{n\in\Delta}{\bigcup}D_n\cup\underset{m\in\Delta'}{\bigcup}F_m$ for some finite $\Delta$,
$\Delta'\subseteq\omega$,
then there is some $D_n$ (or $F_m$) such that $\delta(D_n)\geq\delta(B)$ (or $\delta(F_m)\geq\delta(
B)$), contradicting the assumption. By $\omega_1$-saturation of $V$, $\Sigma$ is realized in $V$, and we have the conclusion.
\\
\indent
For the moreover part, consider $\Sigma':=\{B(x), B(y), x\neq y, E(x,y)\}\cup\\\{\neg D_n(x), \neg D_n(y), \neg F_m(x), \neg F_m(y)\,|\, n<\omega, m<\omega\}$. By assumption, we have $\delta(B\setminus\underset{n\in\Delta}{\bigcup}D_n\cup\underset{m\in\Delta'}{\bigcup}F_m)=\delta(B)$. Then there exist $b_1\neq b_2$ in $B\setminus\underset{n\in\Delta}{\bigcup}D_n\cup\underset{m\in\Delta'}{\bigcup}F_m$ such that $E(b_1, b_2)$ (Otherwise, $B\setminus\underset{n\in\Delta}{\bigcup}D_n\cup\underset{m\in\Delta'}{\bigcup}F_m$ satisfies property $(*)$, contradicting the assumption that property $(*)$ fails for all sets with positive $\delta$-dimension). By compactness and $\omega_1$-saturation of $V$, $\Sigma'$ is realized in $V$.
\end{proof}
\begin{theorem}
 The family of finite graphs with VC-dimension $\leq 1$ has the Erd\H{o}s-Hajnal property.
\end{theorem}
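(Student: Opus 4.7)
The plan is to work in the ultraproduct $V = \prod V_i/\mathcal{F}$ of a sequence of finite graphs with VC-dimension $\leq 1$. By Proposition \ref{prop*} it suffices to exhibit a definable $A \subseteq V$ with $\delta(A) > 0$ satisfying Property $(*)$: such an $A$ produces a homogeneous subset of positive $\delta$-dimension, which pulls back to the desired polynomial clique-or-anti-clique bound on the $V_i$. I argue for the existence of such $A$ by contradiction.

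Suppose Property $(*)$ fails on every definable positive-$\delta$-dimension set. By Claim \ref{claim2} and its moreover clause, for every such $A$ the set of vertices fully splitting $A$ has $\delta$-dimension equal to $\delta(A)$, and is neither a clique nor an anti-clique (else $A^+$ or $A^-$ would witness Property $(*)$). Applying this in turn to $V$, to $N(x_0)$, and to $M(x_0)$ yields $x_0 \in V$ fully splitting $V$, $x_1 \in N(x_0)$ fully splitting $N(x_0)$, and $x_2 \in M(x_0)$ fully splitting $M(x_0)$. The cells $N(x_0) \cap N(x_1)$ and $N(x_0) \cap M(x_1)$ of $\{x_0, x_1\}$ then have positive $\delta$-dimension, so VC-dimension $\leq 1$ forces one of the other two cells to be empty, giving either $N(x_1) \subseteq N(x_0) \cup \{x_0\}$ (call $x_1$ \emph{Type I}) or $M(x_0) \subseteq N(x_1)$ (\emph{Type II}); symmetrically $x_2$ is either Type I$'$ with $N(x_2) \subseteq M(x_0)$ or Type II$'$ with $N(x_0) \subseteq N(x_2)$.

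A direct check rules out the two cross-pairings: in (Type I, Type II$'$), $x_1 \in N(x_0) \subseteq N(x_2)$ gives $E(x_1, x_2)$, while $x_2 \in M(x_0) \setminus (N(x_0) \cup \{x_0\})$ forces $x_2 \notin N(x_1)$ and hence $\neg E(x_1, x_2)$, a contradiction; (Type II, Type I$'$) is symmetric. Only the diagonal pairings (Type I, Type I$'$) and (Type II, Type II$'$) survive, and a $\delta$-dimension pigeonhole applied to the full-$\delta$-dimension splitting sets of $N(x_0)$ and $M(x_0)$ produces definable sets $T \subseteq N(x_0)$ and $T' \subseteq M(x_0)$ of $\delta$-dimension equal to $1$ consisting of pure-type vertices.

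The main obstacle is closing the two surviving diagonal cases. The plan is to iterate the whole construction inside $T$, where Property $(*)$ still fails by hypothesis: the Type I or Type II containments coming from successive iterations compose into increasingly rigid neighborhood constraints, while at every level the splitting set is neither a clique nor an anti-clique, so both $E$- and $\neg E$-pairs persist. Combining these across finitely many steps should eventually force a quadruple of vertices whose pairwise adjacency pattern realizes all four cells for some pair, contradicting VC-dimension $\leq 1$. Carrying out this iteration cleanly — in particular, proving that it terminates — is the technical heart of the argument.
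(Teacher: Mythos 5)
Your overall strategy matches the paper's: pass to the ultraproduct, invoke Proposition \ref{prop*} to reduce to the case where Property $(*)$ fails on every definable set of positive $\delta$-dimension, use Claim \ref{claim2} to extract splitting vertices, and derive a shattered pair contradicting VC-dimension $\leq 1$. Your setup through the Type I/II classification and the elimination of the cross-pairings is sound. But the proof is not complete: the two diagonal cases (Type I, Type I$'$) and (Type II, Type II$'$) are exactly where the contradiction has to come from, and you defer them to an iteration whose termination you explicitly do not establish. ``Combining these across finitely many steps should eventually force a quadruple\dots'' is the entire content of the theorem at that point; without a bound on the depth of the iteration, or an invariant that strictly decreases, nothing rules out that the nested containments $N(x_1)\subseteq N(x_0)\cup\{x_0\}$, $N(x_1')\subseteq N(x_1)\cup\{x_1\}$, \dots persist forever without ever producing a shattered pair. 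This is a genuine gap, not a technicality.

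The paper shows that no open-ended iteration is needed. After splitting $V$ at $a_0$ into $L$ and $R$, it asks one question: is there a vertex $c$ splitting $R$ whose adjacency to $L$ is mixed? If yes, the pair $\{a_0,c\}$ is already shattered (the two witnesses in $L$ give the patterns with $\neg E(\cdot,a_0)$, and two witnesses in $R$ coming from the fact that $c$ splits $R$ give the other two). If no, then every splitter of $R$ is complete or anticomplete to $L$, and one further level of splitting inside $L$ (respectively inside $R$), again split into two sub-cases, exhibits an explicit shattered pair in every branch. So the case analysis terminates at depth three, with the shattered pair named concretely each time. A secondary issue in your writeup: the set of vertices splitting $A$ is only a complement of a countable union of definable sets, so it is not internal and ``its $\delta$-dimension'' is not defined in this framework; your ``$\delta$-dimension pigeonhole'' on that set, and the claim that it ``is neither a clique nor an anti-clique,'' need to be routed through the definable approximations $B\setminus\bigcup_{n\in\Delta}D_n$ and $\omega_1$-saturation as in Claim \ref{claim2}, rather than applied to the splitting set directly.
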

\begin{proof}
Suppose no. 
For each $i\in\omega$,
let $G_i=(V_i,E_i)$ be a finite graph with
VC-dimension $\leq 1$ such that
all homogeneous subsets of $G_i$ has size 
$<|V_i|^{\frac{1}{i}}$.
 Let $G=(V,E)$ be the ultraproduct
$\underset{i\in\omega}{\prod}V_i/
\mathcal{F}$. 
By proposition \ref{prop*},
property $(*)$ fails for all $A\subseteq V$ with $\delta(A)>0$. By claim \ref{claim2}, there is $a_0\in V$ that splits $V$. Let $L:= \{x\in V\,|\,x\neq a_0\wedge\neg E(x,a_0)\}$ and  $R:=\{x\in V\,|\,x\neq a_0\wedge E(x,a_0)\}$.\\ 
\begin{tikzpicture}
\draw[blue, thick] (0,2) -- (0,-2);
\filldraw[black] (0,0) circle (2pt) node[anchor=west] {$a_0$};
\node[text width=3cm] at (-2,0){$\neg E(x,a_0)$};
\node[text width=3cm] at (4,0){$E(x,a_0)$};
\end{tikzpicture}
\\
\indent
If there is $c\in R$ splitting $R$ such that there exist $d_1, d_2\in L$ with $E(c,d_1)\wedge \neg E(c,d_2)$,  then take $b_0=a_0$, $b_1=c$.\\ 
\begin{tikzpicture}
\draw[blue, thick] (0,2) -- (0,-2);
\draw[gray, thick] (-2,1) -- (2,1);
\draw[gray, thick] (2,1) -- (2,0);
\draw[gray, thick] (2,0) -- (0,0);
\draw[gray, thick] (2,-1) -- (0,0);
\filldraw[black] (0,0) circle (2pt) node[anchor=west] {$a_0$};
\filldraw[black] (-2,1) circle (2pt) node[anchor=west] {$d_1$};
\filldraw[black] (-2,0) circle (2pt) node[anchor=west] {$d_2$};
\filldraw[black] (2,0) circle (2pt) node[anchor=west] {$d_3$};
\filldraw[black] (2,-1) circle (2pt) node[anchor=west] {$d_4$};
\filldraw[black] (2,1) circle (2pt) node[anchor=west] {$c$};
\node[text width=3cm] at (-1,1.5){$L$};
\node[text width=3cm] at (5,1.5){$R$};
\end{tikzpicture}
\\
\indent
By the choice of $c$, there exist $d_3$, $d_4\in R$ such that $E(d_3,c)\wedge \neg E(d_4,c)$. Then $a_0$, $c$; $d_1$, $d_2$, $d_3$, $d_4$ witness that $E$ has VC-dimension $>1$, a contradiction.\\
Otherwise, assume for any $c\in R$ splitting $R$, we have for all $d\in L$, $E(d,c)$ or for all $d\in L$, $\neg E(d,c)$. (There is some $c\in R$ splitting $R$ by claim \ref{claim2}.)
\\
\indent
Suppose $c\in R$ splits $R$ and for all $d\in L$, $E(d,c)$. By claim \ref{claim2}, let $d_1\in L$ split $L$. We say $L_1=\{x\in L\,|\,x\neq d_1\wedge\neg E(x,d_1)\}$ and $R_1=\{x\in L\,|\, x\neq d_1\wedge E(x,d_1)\}$. If $\forall x\in L_1$ splitting $L_1$, $\forall y\in R_1$, $E(x,y)$, then take $d_2\in R_1$ such that $d_2$ splits $R_1$. Take $d_3\in R_1$ such that $\neg E(d_3,d_2)$. Take $d_4\in L_1$ splitting $L_1$. Then $E(d_4, d_3)\wedge\neg E(d_4, d_1)$. Thus, $d_1$, $d_3$; $a_0$, $c$, $d_2$, $d_4$ witness that $E$ has VC-dimension $>1$.\\
\begin{tikzpicture}
\draw[blue, thick] (0,2) -- (0,-2);
\draw[blue, thick] (-3,1.5) -- (0,0);
\draw[gray, thick] (-2,1) -- (-1.3,1.5);
\draw[gray, thick] (-2,1) -- (2,0);
\draw[gray, thick] (-0.5,1) -- (2,0);
\draw[gray, thick] (-2,-1) -- (-0.5,1);
\filldraw[black] (0,0) circle (2pt) node[anchor=west] {$a_0$};
\filldraw[black] (-2,1) circle (2pt) node[anchor=west] {$d_1$};
\filldraw[black] (-1.3,1.5) circle (2pt) node[anchor=west] {$d_2$};
\filldraw[black] (-0.5,1) circle (2pt) node[anchor=west] {$d_3$};
\filldraw[black] (-2,-1) circle (2pt) node[anchor=west] {$d_4$};
\filldraw[black] (2,0) circle (2pt) node[anchor=west] {$c$};
\node[text width=3cm] at (-1,1.5){$L$};
\node[text width=3cm] at (5,1.5){$R$};
\end{tikzpicture}
\\
\indent
On the other hand, if we have $\exists x\in L_1$ splitting $L_1$, $\exists y\in R_1$, $\neg E(x,y)$, take $d_2\in L_1, d_3\in R_1$ such that $d_2$ splits $L_1$ and $\neg E(d_2,d_3)$.Take $d_4\in L_1$ such that $E(d_2,d_4)$. Thus, $d_1$, $d_2$; $a_0$, $c$, $d_3$, $d_4$ witness that $E$ has VC-dimension $>1$. \\
\begin{tikzpicture}
\draw[blue, thick] (0,2) -- (0,-2);
\draw[blue, thick] (-3,1.5) -- (0,0);
\draw[gray, thick] (-2,1) -- (-0.5,1);
\draw[gray, thick] (-2,1) -- (2,0);
\draw[gray, thick] (-2,-0.5) -- (2,0);
\draw[gray, thick] (-2,-0.5) -- (-1,-1);
\filldraw[black] (0,0) circle (2pt) node[anchor=west] {$a_0$};
\filldraw[black] (-2,1) circle (2pt) node[anchor=west] {$d_1$};
\filldraw[black] (-2,-0.5) circle (2pt) node[anchor=west] {$d_2$};
\filldraw[black] (-0.5,1) circle (2pt) node[anchor=west] {$d_3$};
\filldraw[black] (-1,-1) circle (2pt) node[anchor=west] {$d_4$};
\filldraw[black] (2,0) circle (2pt) node[anchor=west] {$c$};
\node[text width=3cm] at (-1,1.5){$L$};
\node[text width=3cm] at (5,1.5){$R$};
\end{tikzpicture}
\\
\indent
Hence for all $c\in R$ splitting $R$, we have for all $d\in L$, $\neg E(d,c)$. Take any $c_1\in R$ that splits $R$. We say $L_1=\{x\in R\,|\,x\neq c_1\wedge\neg E(x,c_1)\}$ and $R_1=\{x\in R\,|\, x\neq c_1\wedge E(x,c_1)\}$. If $\forall x\in L_1$ splitting $L_1$, $\forall y\in R_1$, $E(x,y)$, then take $c_2$ that splits $R_1$ and $c_3$ that splits $\{x\in R_1\,|\,\neg E(x,c_2)\}$. (In particular, $c_3\in R_1 \wedge \neg E(c_3, c_2)$.) Take any $c_4\in L_1$ that splits $L_1$. Then $E(c_3,c_4)\wedge\neg E(c_1,c_4)$. Since $c_3$ splits $\{x\in R_1\,|\,\neg E(x,c_2)\}\subseteq R$, it splits $R$ by definition. So $\neg E(d,c_1)\wedge\neg E(d, c_3)$. Thus $c_1$, $c_3$; $a_0$, $d$, $c_2$, $c_4$ witness that $E$ has VC-dimension $>1$.\\
\begin{tikzpicture}
\draw[blue, thick] (0,2) -- (0,-2);
\draw[blue, thick] (3,-1.8) -- (0,1.2);
\draw[gray, thick] (1,2) -- (0,0);
\draw[gray, thick] (0.6,0.6) -- (0,0);
\draw[gray, thick] (0.6,0.6) -- (2,1);
\draw[gray, thick] (1,2) -- (1.2,-0.7);
\filldraw[black] (0,0) circle (2pt) node[anchor=west] {$a_0$};
\filldraw[black] (0.6,0.6) circle (2pt) node[anchor=west] {$c_1$};
\filldraw[black] (2,1) circle (2pt) node[anchor=west] {$c_2$};
\filldraw[black] (1,2) circle (2pt) node[anchor=west] {$c_3$};
\filldraw[black] (1.2,-0.7) circle (2pt) node[anchor=west] {$c_4$};
\filldraw[black] (-2,0) circle (2pt) node[anchor=west] {$d$};
\node[text width=3cm] at (-1,1.5){$L$};
\node[text width=3cm] at (5,1.5){$R$};
\end{tikzpicture}
\\
\indent
On the other hand, if $\exists x\in L_1$ splitting $L_1$, $\exists y\in R_1$, $\neg E(x,y)$, take $c_2\in L_1$, $c_3\in R_1$ such that $c_2$ splits $L_1$ and $\neg E(c_2,c_3)$. Take any $c_4\in L_1$ such that $E(c_2,c_4)$. Then $E(c_1,c_3)\wedge\neg E(c_2,c_3)\wedge E(c_2,c_4)\wedge\neg E(c_1,c_4)$. Since $c_2$ splits $L_1\subseteq R$, $c_2$ splits $R$ and hence $\neg E(d,c_2)$. So $c_1$, $c_2$; $a_0$, $d$, $c_3$, $c_4$ witness that $E$ has VC-dimension $>1$.
\\\begin{tikzpicture}
\draw[blue, thick] (0,2) -- (0,-2);
\draw[blue, thick] (3,-1.8) -- (0,1.2);
\draw[gray, thick] (0.6,-1) -- (0,0);
\draw[gray, thick] (0.6,0.6) -- (0,0);
\draw[gray, thick] (0.6,-1) -- (1.2,-1.7);
\draw[gray, thick] (1,2) -- (0.6,0.6);
\filldraw[black] (0,0) circle (2pt) node[anchor=west] {$a_0$};
\filldraw[black] (0.6,0.6) circle (2pt) node[anchor=west] {$c_1$};
\filldraw[black] (0.6,-1) circle (2pt) node[anchor=west] {$c_2$};
\filldraw[black] (1,2) circle (2pt) node[anchor=west] {$c_3$};
\filldraw[black] (1.2,-1.7) circle (2pt) node[anchor=west] {$c_4$};
\filldraw[black] (-2,0) circle (2pt) node[anchor=west] {$d$};
\node[text width=3cm] at (-1,1.5){$L$};
\node[text width=3cm] at (5,1.5){$R$};
\end{tikzpicture}
\\
\indent
So when $E$ is VC-dimension $1$, property $(*)$ must hold for some definable $A\subseteq V$ with positive $\delta$-dimension.
\end{proof}
\section{Revisiting stable case}\label{stabcase}
\begin{theorem}
 For each $k\in\mathbb{N}$,
 the family of $k$-stable graphs has the Erd\H{o}s-Hajnal property.    
\end{theorem}
\begin{proof}
    Fix $k\in\mathbb{N}$.
    Suppose no. 
    For each $i\in\omega$,
let $G_i=(V_i,E_i)$ be a finite 
$k$-stable graph 
such that
all homogeneous subsets of $G_i$ has size 
$<|V_i|^{\frac{1}{i}}$.
 Let $G=(V,E)$ be the ultraproduct
$\underset{i\in\omega}{\prod}V_i/
\mathcal{F}$. 
\\
\indent
    Let $A_{\emptyset}=V$. 
    By claim \ref{claim2},
    there is $a_\emptyset\in V$ such that 
    $\delta(E(a_\emptyset,V))>0$ and 
    $\delta(\neg
    E(a_\emptyset,V))>0$. 
    Suppose $\{a_\epsilon:
    \epsilon\in 2^{m}\}$
and    $\{A_{\epsilon}:
    \epsilon\in 2^m\}$ are defined where
    for each $\epsilon\in 2^m$,
    \begin{enumerate}
        \item 
        $a_\epsilon\in A_\epsilon$;
        \item 
        $\delta(E(a_\epsilon,A_\epsilon))>0$ and 
    $\delta(\neg
    E(a_\epsilon,A_\epsilon))>0$ (Hence $\delta(A_\epsilon)>0$).
    \end{enumerate}
    Take $A_{\epsilon^\frown 0}=
    \neg
    E(a_{\epsilon},
    A_{\epsilon})
    $,
    $A_{\epsilon^\frown 1}=
    E(a_{\epsilon},
    A_{\epsilon})$.
    By claim \ref{claim2},
    for any $\epsilon\in 2^{m+1}$,
    there is $a_{\epsilon}\in
    A_{\epsilon}$ such that
    $\delta(E(a_{\epsilon},
    A_{\epsilon}))>0$ and
    $\delta(\neg
    E(a_{\epsilon},
    A_{\epsilon}))>0$.
    Then $\{\underset{\epsilon
    \prec p}{\bigcap}
    A_{\epsilon}
    :
    p\in 2^{\omega}\}$ is a collection 
    of $2^\omega$ many $E$-types with parameters in the countable set
    $\{a_\epsilon: 
    \epsilon\in 2^{<\omega}\}$. 
    By fact \ref{stabequiv},
    $E$ is not $k$-stable, 
    a contradiction.
    \\
    \indent
    (Note: We assume here $E$ to be a binary relation. The author doesn't know how to avoid using Shelah's 2-rank for hypergraphs.)
\end{proof}
\section{VC-dimension 2}\label{vcq2}
\indent
\begin{theorem}
    The family of graphs with VC-dimension $\leq 2$ has the Erd\H{o}s-Hajnal property.
\end{theorem}
\begin{proof}
Proof of substitution combined with 
Erd\H os-Hajnal property for $\{C_6,\overline{C_6}\}$-free graphs (fact \ref{c6eh}) 
gives Erd\H os-Hajnal property for VC-dimension 2.
\\
\indent
By fact \ref{c6eh}, fix $c>0$ such that for any $\{C_6,\overline{C_6}\}$-free graph $P$ ,
$P$ has a homogeneous subset of size $\geq|P|^c$. 
Let $\delta$ satisfy $\frac{1}{2}-6\delta>0$, $c\delta<\frac{1}{2}-6\delta$, 
$G$ be a graph with $|G|=n$ such that the largest size of a homogeneous set of $G$ is $<|G|^{c\delta}$, 
and $m=\lceil n^\delta\rceil>6$.
Then $G$ has at least $\dfrac{\binom{n}{m}}{\binom{n-6}{m-6}}$ induced subgraphs isomorphic to $C_6$ or $\overline{C_6}$.
Then there are at least $\dfrac{\binom{n}{m}}{2\binom{n-6}{m-6}}$ copies of $C_6$ or 
$\dfrac{\binom{n}{m}}{2\binom{n-6}{m-6}}$ copies of $\overline{C_6}$.
Replacing $G$ with $\overline{G}$ if necessary,
may assume the first case holds. 
We can find $u_1,u_3,u_4,u_5,u_6$ that are the first, third, forth, fifth, sixth points on the 
cycle respectively, for $\dfrac{\binom{n}{m}}{2n(n-1)(n-2)(n-3)(n-4)\binom{n-6}{m-6}}$ many induced $6$-cycles. 
Among these copies the size of the set of the second 
point on the cycle is at least $\dfrac{\binom{n}{m}}{2n(n-1)(n-2)(n-3)(n-4)\binom{n-6}{m-6}}=\dfrac{n-5}{2m...(m-5)}$. 
So we will have the family of graphs not inducing $C_6$ with a vertex substituted by an edge or 
$\overline{C_6}$ with a vertex substituted by a pair of nonadjacent vertices satisfies the Erd\H os-Hajnal property.
Repeat this argument and we will replace the forth vertex on the cycle by an edge and then the sixth vertex.
We will then get the following graph:\\\\\\
\begin{tikzpicture}
\draw[black, very thick] (0,0)--(1,0)--(1.5,1)--(1,2)--(0,2)--(-0.5,1)--cycle;
\filldraw[black] (0,0) circle (2pt) node[anchor=north] {$1$};
\filldraw[black] (1,0) circle (2pt) node[anchor=west] {$2$};
\filldraw[black] (1.5,1) circle (2pt) node[anchor=west] {$3$};
\filldraw[black] (1,2) circle (2pt) node[anchor=west] {$4$};
\filldraw[black] (0,2) circle (2pt) node[anchor=east] {$5$};
\filldraw[black] (-0.5,1) circle (2pt) node[anchor=west] {$6$};
\draw[black, very thick] (1,0) -- (1.5,-0.5);
\filldraw[black] (1.5,-0.5) circle (2pt) node[anchor=north] {$2'$};
\draw[black, very thick] (0,0) -- (1.5,-0.5);
\draw[black, very thick] (1.5,1) -- (1.5,-0.5);
\end{tikzpicture}
\begin{tikzpicture}
\draw[black, very thick] (0,0)--(1,0)--(1.5,1)--(1,2)--(0,2)--(-0.5,1)--cycle;
\filldraw[black] (0,0) circle (2pt) node[anchor=north] {$1$};
\filldraw[black] (1,0) circle (2pt) node[anchor=west] {$2$};
\filldraw[black] (1.5,1) circle (2pt) node[anchor=west] {$3$};
\filldraw[black] (1,2) circle (2pt) node[anchor=west] {$4$};
\filldraw[black] (0,2) circle (2pt) node[anchor=east] {$5$};
\filldraw[black] (-0.5,1) circle (2pt) node[anchor=west] {$6$};
\draw[black, very thick] (1,0) -- (1.5,-0.5);
\filldraw[black] (1.5,-0.5) circle (2pt) node[anchor=north] {$2'$};
\draw[black, very thick] (0,0) -- (1.5,-0.5);
\draw[black, very thick] (1.5,1) -- (1.5,-0.5);
\filldraw[black] (1.5,2.6) circle (2pt) node[anchor=south] {$4'$};
\draw[black, very thick] (1,2) -- (1.5,2.6);
\draw[black, very thick] (1.5,1) -- (1.5,2.6);
\draw[black, very thick] (0,2) -- (1.5,2.6);
\end{tikzpicture}
\begin{tikzpicture}
\draw[black, very thick] (0,0)--(1,0)--(1.5,1)--(1,2)--(0,2)--(-0.5,1)--cycle;
\filldraw[black] (0,0) circle (2pt) node[anchor=north] {$1$};
\filldraw[black] (1,0) circle (2pt) node[anchor=west] {$2$};
\filldraw[black] (1.5,1) circle (2pt) node[anchor=west] {$3$};
\filldraw[black] (1,2) circle (2pt) node[anchor=west] {$4$};
\filldraw[black] (0,2) circle (2pt) node[anchor=east] {$5$};
\filldraw[black] (-0.5,1) circle (2pt) node[anchor=west] {$6$};
\draw[black, very thick] (1,0) -- (1.5,-0.5);
\filldraw[black] (1.5,-0.5) circle (2pt) node[anchor=north] {$2'$};
\draw[black, very thick] (0,0) -- (1.5,-0.5);
\draw[black, very thick] (1.5,1) -- (1.5,-0.5);
\filldraw[black] (1.5,2.6) circle (2pt) node[anchor=south] {$4'$};
\draw[black, very thick] (1,2) -- (1.5,2.6);
\draw[black, very thick] (1.5,1) -- (1.5,2.6);
\draw[black, very thick] (0,2) -- (1.5,2.6);
\filldraw[black] (-1,1.2) circle (2pt) node[anchor=south] {$6'$};
\draw[black, very thick] (-1,1.2) -- (-0.5,1);
\draw[black, very thick] (-1,1.2) -- (0,2);
\draw[black, very thick] (-1,1.2) -- (0,0);
\end{tikzpicture}
\\
(The edge relation between $2'$, $4'$ and $6'$ doesn't matter.)\\\\\\
\indent
Suppose Erd\H os-Hajnal property fails for the family of finite graphs with VC-dimension $2$.
 For each $i\in\omega$,
let $G_i=(V_i,E_i)$ be a finite 
graphs with VC-dimension $2$
such that
all homogeneous subsets of $G_i$ has size 
$<|V_i|^{\frac{1}{i}}$.
 Let $G=(V,E)$ be the ultraproduct
$\underset{i\in\omega}{\prod}V_i/
\mathcal{F}$. 
Then there is $x\in V$ such that $\delta(E(x,V))>0$ and there is $y\in E(x,V)$ such that $\delta(\neg E(y,V)\cap E(x,V))>\alpha>0$.
Now consider the definable sets $W=\neg E(y,V)\cap E(x,V)$ such that $|W_i|>|V_i|^\alpha$ for all $i\in F$, some $F\in \mathcal{F}$. 
May assume $i$ is large. 
By the above, there is in $W_i$ or in the complement of $W_i$ an induced copy of $6$-cycle with the 
second, forth, sixth points replaced by an edge respectively. 
Thus $(V,E)$ has VC-dimension $>2$, a contradiction.\\
\begin{tikzpicture}
\draw[black, very thick] (0,0)--(1,0)--(1.5,1)--(1,2)--(0,2)--(-0.5,1)--cycle;
\filldraw[black] (0,0) circle (2pt) node[anchor=north] {$1$};
\filldraw[red] (1,0) circle (2pt) node[anchor=west] {$2$};
\filldraw[black] (1.5,1) circle (2pt) node[anchor=west] {$3$};
\filldraw[red] (1,2) circle (2pt) node[anchor=west] {$4$};
\filldraw[black] (0,2) circle (2pt) node[anchor=east] {$5$};
\filldraw[red] (-0.5,1) circle (2pt) node[anchor=west] {$6$};
\draw[black, very thick] (1,0) -- (1.5,-0.5);
\filldraw[black] (1.5,-0.5) circle (2pt) node[anchor=north] {$2'$};
\draw[black, very thick] (0,0) -- (1.5,-0.5);
\draw[black, very thick] (1.5,1) -- (1.5,-0.5);
\filldraw[black] (1.5,2.6) circle (2pt) node[anchor=south] {$4'$};
\draw[black, very thick] (1,2) -- (1.5,2.6);
\draw[black, very thick] (1.5,1) -- (1.5,2.6);
\draw[black, very thick] (0,2) -- (1.5,2.6);
\filldraw[black] (-1,1.2) circle (2pt) node[anchor=south] {$6'$};
\draw[black, very thick] (-1,1.2) -- (-0.5,1);
\draw[black, very thick] (-1,1.2) -- (0,2);
\draw[black, very thick] (-1,1.2) -- (0,0);
\filldraw[black] (3,1) circle (2pt) node[anchor=south] {$y$};
\filldraw[black] (4,1.2) circle (2pt) node[anchor=south] {$x$};
\end{tikzpicture}
\end{proof}
\bibliographystyle{alpha}
\bibliography{ref}
\end{document}